\newtheorem{thm}{Theorem}[section]
\newtheorem{lem}[thm]{Lemma}
\newtheorem{cor}[thm]{Corollary}
\theoremstyle{definition}
\newtheorem{defn}[thm]{Definition}
\newtheorem{rem}[thm]{Remark}
\numberwithin{equation}{thm}
\begin{document}
\title[Recollement of Additive Quotient Categories ]
{Recollement of Additive Quotient Categories}

\author{Minxiong Wang and Zengqiang Lin}
\address{ School of Mathematical sciences, Huaqiao University,
Quanzhou\quad 362021,  China.} \email{mxw@hqu.edu.cn}
\address{ School of Mathematical sciences, Huaqiao University,
Quanzhou\quad 362021,  China.} \email{lzq134@163.com}

\thanks{This work was supported  by  the Natural Science Foundation of Fujian Province (Grants No. 2013J05009) and the Science Foundation of Huaqiao University (Grants No. 2014KJTD14)}

\subjclass[2010]{18E30, 18E10}

\keywords{ Recollement of additive categories; triangulated category; quotient category; mutation pair.}

\begin{abstract} In this note,  we define  a recollement of additive categories, and prove that such a recollement
can induce a recollement of their quotient categories. As an application, we get a  recollement of quotient triangulated  categories induced by mutation pairs.
\end{abstract}

\maketitle

\section{Introduction}

 The recollement of triangulated categories was  introduced in a geometric setting by
 Beilinson, Bernstein, and  Deligne \cite{[BBD]}, and has been studied in an algebraic setting
 by  Cline,   Parshall and   Scott \cite{[CPS1],[CPS2]}.
The  recollement of abelian category was formulated by  Franjou and Pirashvili \cite{[FP]}.
The recollment of categories plays an important role in algebraic geometry and in representation theory.

Quotient categories give a way to produce abelian categories \cite{[KZ],[Na]}. Such as, Koenig and Zhu showed that the quotient category $\mathcal{C}/\mathcal{T}$ is abelian provided that $\mathcal{T}$ is a cluster tilting subcategory of a triangulated category $\mathcal{C}$ \cite{[KZ]}. A recollement of triangulated categories can induced a recollement of quotient abelian categories \cite{[Ch],[LW]}. On the other hand, quotient categories also give a way to produce triangulated categories \cite{[Ha],[IY]}. Such as, Iyama and Yoshino proved that if $(\mathcal{Z},\mathcal{Z})$ is a $\mathcal{D}$-mutation pair and $\mathcal{Z}$ is extension-closed, then $\mathcal{Z}/\mathcal{D}$ is a triangulated category \cite{[IY]}. It is natural to ask that whether a recollement of categories can induce a recollement of quotient triangulated categories.

In this note, to unify the recollment of abelian categories and the recollement of triangulated categories, we define the notion of recollement of addtive categories. In section 2, we  prove that a recollement of additive categories can induce a recollement of their subcategories and a recollement of their quotient categories under certain conditions.
 In section 3, we apply the main result of section 2 to triangulated  categories and obtain a recollement of  quotient triangulated  categories induced by mutation pairs.

\section{Recollement of additive categories}
Throughout this note,  all  subcategories of a category are full and closed under isomorphisms,
direct sums and direct summands. Let $F:\mathcal{A}\rightarrow \mathcal{B}$ be a functor. We denote by
Im$F$ the subcategory of $\mathcal{B}$ generated by objects $F(X)$ with $X\in\mathcal{A}$, and by
Ker$F$  the subcategory of $\mathcal{A}$ generated by objects $X$ with $F(X)=0$. Let $f:X\rightarrow Y$, $g:Y\rightarrow Z$
be two morphisms in $\mathcal{A}$, and $M$ an object in $\mathcal{C}$, we denote by $gf$ the composition of
$f$ and $g$,  by $f_{\ast}$ the morphism $\mbox{Hom}_{\mathcal{A}}(M,f):
\mbox{Hom}_\mathcal{A}(M,X)\rightarrow \mbox{Hom}_{\mathcal{A}}(M,Y)$, and by $f^{\ast}$ the morphism $\mbox{Hom}_{\mathcal{A}}(f,M):
\mbox{Hom}_\mathcal{A}(Y,M)\rightarrow \mbox{Hom}_{\mathcal{A}}(X,M)$.

\begin{defn}\label{defn2.1}
 Let $\mathcal {A}, \mathcal {A}'$ and $\mathcal {A}''$ be
additive categories. The diagram
 $$  \quad \quad \quad \quad \quad\quad \quad \quad \quad \quad \xymatrix @C=2.7pc {
      \mathcal {A}' \ar[r]^{i_*} & \mathcal {A}   \ar@<+2.8ex>[l]_{i^!}\ar@<-2.8ex>[l]_{i^*}\ar[r]^{j^*}
      & \mathcal {A}'' \ar@<+2.8ex>[l]_{j_*}\ar@<-2.8ex>[l]_{j_!}
      }
       \quad \quad \quad \quad \quad\quad \quad \quad \quad \quad  (2.1)$$
of additive functors is a {\em recollement of additive category $\mathcal{A}$ relative
to  additive categories $\mathcal{A}'$ and $\mathcal {A}''$},  if the following conditions are satisfied:
\par
(R1) $(i^{*}, i_{*}), (i_{*}, i^{!}), (j_{!}, j^{*})$ and $(j^{*}, j_{*})$ are adjoint pairs;
\par
(R2) $i_{*}, j_{!}$ and $j_{*}$ are full embeddings;
\par
(R3) Im$i_{*}$=Ker$j^{*}$.
\end{defn}
\par
\begin{rem} \label{rem}
(1) If all involved categories are abelian, then the diagram (2.1) is a recollement of abelian categories. If all
involved categories are triangulated,  and all involved functors are exact,  then by \cite[Theorem 3.2]{[Z]}, the diagram (2.1) is a recollement of triangulated categories in the sense of  Beilinson-Bernstein-Deligne.
\par
(2) For any adjoint pair $(F,G)$, it is well-known that $F$ is fully faithful if and only if the unit
$\varepsilon: id\rightarrow GF$ is a natural equivalence.
If  $F$ is a full embedding, then there exists an
 adjoint pair $(F,G')$ such that the unit $\varepsilon: id\rightarrow GF$ is the identity.
Thus if necessary, we may assume that $i^*i_*=id, i^!i_*=id$ and $j^*j_*=id$ in Definition \ref{defn2.1}.
\end{rem}

Let $\mathcal {A}$ be an additive category and $\mathcal {X}$ a subcategory. Then by definition the quotient category
${\mathcal {A}}/{\mathcal {X}}$ has the same objects as $\mathcal{A}$, and the set of morphisms from $A$ to $B$ in the quotient category is defined as the quotient group $\mbox{Hom}_{\mathcal{A}}(A,B)/[\mathcal{X}](A,B)$, where $[\mathcal{X}](A,B)$ is the subgroup of morphism in $\mathcal{A}$ factoring through some object in $\mathcal{X}$. For $f:A\rightarrow B$ a morphism in $\mathcal{A}$,
we denote by $\overline{f}$ its residue class in the quotient category.

The following lemma is well-known.
\par
\begin{lem} \label{lem3}
Let $\mathcal {A}$ be an additive category and $\mathcal {X}$ a subcategory.
There exists  an additive functor $Q_{\mathcal{A}}:\mathcal {A}\rightarrow {\mathcal {A}}/{\mathcal {X}}$ such that
\par
(1)  $Q_{\mathcal{A}}(X)$ is null for every $X\in\mathcal{X}$;
\par
(2)  For any additive category $\mathcal {B}$ and additive functor $F:\mathcal {A}\rightarrow \mathcal{B}$ such
that $F(X)$ is null for every $X\in\mathcal{X}$,  there exists a unique additive functor
$\widetilde{F}:{\mathcal {A}}/{\mathcal {X}}\rightarrow \mathcal {B}$ such that $F=\widetilde{F}\cdot Q_{\mathcal{A}}$,  i.e.,   we have the following commutative diagram of functors.
$$
\xymatrix{
  \mathcal{A} \ar[d]_{Q_{\mathcal{A}}} \ar[r]^{F} & \mathcal {B}       \\
  {\mathcal {A}}/{\mathcal {X}} \ar@{.>}[ur]_{\widetilde{F}}                     }
$$
\end{lem}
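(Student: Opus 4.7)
The plan is to build $Q_{\mathcal{A}}$ by hand from the explicit description of the quotient category and then verify its universal property. Concretely, I would define $Q_{\mathcal{A}}$ to be the identity on objects and send a morphism $f$ to its residue class $\overline{f}$. Additivity of $Q_{\mathcal{A}}$ is immediate because the Hom-groups of $\mathcal{A}/\mathcal{X}$ are quotient groups of those of $\mathcal{A}$ and the projection is a group homomorphism, while composition in $\mathcal{A}/\mathcal{X}$ is induced from composition in $\mathcal{A}$. To prove (1), I would note that for $X\in\mathcal{X}$ the identity morphism $\mathrm{id}_X$ factors through $X\in\mathcal{X}$ itself, so $\overline{\mathrm{id}_X}=0$ in $\mathcal{A}/\mathcal{X}$; in an additive category a vanishing identity forces the object to be a zero object, so $Q_{\mathcal{A}}(X)$ is null.

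For (2), uniqueness is forced: any $\widetilde{F}$ with $F=\widetilde{F}\cdot Q_{\mathcal{A}}$ must satisfy $\widetilde{F}(A)=F(A)$ on objects and $\widetilde{F}(\overline{f})=F(f)$ on morphisms, because every object of $\mathcal{A}/\mathcal{X}$ is of the form $Q_{\mathcal{A}}(A)$ and every morphism is of the form $\overline{f}$. For existence, I would define $\widetilde{F}$ by these formulas and check well-definedness on morphisms: if $\overline{f}=\overline{g}$, then $f-g$ factors as $f-g=\beta\alpha$ with $\alpha:A\to X$, $\beta:X\to B$ and $X\in\mathcal{X}$, hence $F(f)-F(g)=F(\beta)F(\alpha)$ factors through $F(X)$. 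Since $F(X)$ is null by hypothesis, this composition vanishes and $F(f)=F(g)$, so $\widetilde{F}(\overline{f})$ is independent of the chosen representative.

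The remaining verifications are functoriality and additivity of $\widetilde{F}$, which both follow by pulling them back to $\mathcal{A}$: composition and addition in $\mathcal{A}/\mathcal{X}$ are defined representative-wise, and $F$ respects composition and addition. There is no genuine obstacle here; the only step requiring a moment of care is the well-definedness of $\widetilde{F}$ on morphisms, which is exactly where the hypothesis that $F$ kills every object of $\mathcal{X}$ is used.
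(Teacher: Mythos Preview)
Your argument is correct and is the standard verification of the universal property of the additive quotient. The paper does not supply a proof of this lemma at all---it simply states that the result is well-known---so there is nothing to compare your approach against.
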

The following lemma is basic, but it is important in our proof. For convenience, we give a proof.
\begin{lem}  \label{lem4}
Let $\mathcal {A}$ and $\mathcal {A}'$ be
additive categories,  $i^*:\mathcal{A}\rightarrow
\mathcal{A}'$ and $i_*:\mathcal{A}'\rightarrow \mathcal{A}$ be additive
functors. If $\mathcal {X}$  is a subcategory of $\mathcal{A}$ and  $\mathcal {X}'$ a subcategories of
 $\mathcal {A}'$, satisfying $i^*(\mathcal {X})\subset \mathcal {X}'$ and $i_*(\mathcal {X}')\subset \mathcal {X}$,
then
\par
(1) The functor $i^*$ induces an additive functor  $\widetilde{i^*}:\mathcal{A}/{\mathcal {X}} \rightarrow \mathcal{A}'/\mathcal {X}'$;
 \par
 (2) The functor $i_*$ induces  an additive functor $\widetilde{i_*}:\mathcal{A}'/\mathcal {X}' \rightarrow \mathcal{A}/{\mathcal {X}}$;
  \par
 (3)  If $(i^*, i_*)$ is an adjoint pair,  then so is $(\widetilde{i^*}, \widetilde{i_*})$.
\end{lem}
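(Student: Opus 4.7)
My plan for parts (1) and (2) is a direct application of Lemma \ref{lem3}. For (1), consider the composite additive functor $F := Q_{\mathcal{A}'} \circ i^* : \mathcal{A} \to \mathcal{A}'/\mathcal{X}'$. For any $X \in \mathcal{X}$, the hypothesis $i^*(\mathcal{X}) \subset \mathcal{X}'$ puts $i^*(X) \in \mathcal{X}'$, so $F(X) = Q_{\mathcal{A}'}(i^*(X))$ is null by Lemma \ref{lem3}(1). Lemma \ref{lem3}(2) therefore yields a unique additive $\widetilde{i^*} : \mathcal{A}/\mathcal{X} \to \mathcal{A}'/\mathcal{X}'$ satisfying $\widetilde{i^*}\circ Q_{\mathcal{A}} = Q_{\mathcal{A}'}\circ i^*$. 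Part (2) is the symmetric argument using $i_*(\mathcal{X}') \subset \mathcal{X}$. Note that the construction forces $\widetilde{i^*}(\overline{\alpha}) = \overline{i^*(\alpha)}$ and $\widetilde{i_*}(\overline{\beta}) = \overline{i_*(\beta)}$ on morphisms, which will be needed in (3).

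For (3), write $\eta_{A,A'} : \mbox{Hom}_{\mathcal{A}'}(i^*A, A') \xrightarrow{\;\sim\;} \mbox{Hom}_{\mathcal{A}}(A, i_*A')$ for the given adjunction, with unit $u : \mathrm{id}_{\mathcal{A}} \to i_* i^*$ and counit $v : i^* i_* \to \mathrm{id}_{\mathcal{A}'}$. The strategy is to show that $\eta$ sends the subgroup $[\mathcal{X}'](i^*A, A')$ of morphisms factoring through $\mathcal{X}'$ into $[\mathcal{X}](A, i_*A')$, and that $\eta^{-1}$ sends $[\mathcal{X}](A, i_*A')$ into $[\mathcal{X}'](i^*A, A')$. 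Granting this, $\eta$ descends to a natural isomorphism
$$\widetilde{\eta}_{A,A'} : \mbox{Hom}_{\mathcal{A}'/\mathcal{X}'}(\widetilde{i^*}A,\, A') \longrightarrow \mbox{Hom}_{\mathcal{A}/\mathcal{X}}(A,\, \widetilde{i_*}A'),$$
which is precisely the required adjunction $(\widetilde{i^*}, \widetilde{i_*})$. Naturality of $\widetilde{\eta}$ in each variable is inherited from naturality of $\eta$ combined with the formulas for $\widetilde{i^*}$ and $\widetilde{i_*}$ on morphisms recorded above.

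The only real content is the factorization check. Suppose $f : i^*A \to A'$ decomposes as $i^*A \xrightarrow{h} X' \xrightarrow{g} A'$ with $X' \in \mathcal{X}'$. The standard formula $\eta(f) = i_*(f) \circ u_A = i_*(g) \circ i_*(h) \circ u_A$ then exhibits $\eta(f)$ as factoring through $i_*(X') \in \mathcal{X}$ by the hypothesis $i_*(\mathcal{X}') \subset \mathcal{X}$. Dually, if $\psi : A \to i_*A'$ factors as $A \xrightarrow{h'} X \xrightarrow{g'} i_*A'$ with $X \in \mathcal{X}$, then $\eta^{-1}(\psi) = v_{A'} \circ i^*(\psi) = v_{A'} \circ i^*(g') \circ i^*(h')$ factors through $i^*(X) \in \mathcal{X}'$. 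I expect the main (mild) obstacle to be only bookkeeping with the unit/counit formulas; once these two factorizations are set down, well-definedness, bijectivity, and bi-naturality of $\widetilde{\eta}$ all follow automatically from the corresponding properties of $\eta$.
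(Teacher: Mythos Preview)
Your proposal is correct and follows essentially the same route as the paper: parts (1) and (2) via the universal property of the quotient (Lemma~\ref{lem3}), and part (3) by verifying with the unit/counit formulas that $\eta$ and $\eta^{-1}$ carry morphisms factoring through $\mathcal{X}'$ (resp.\ $\mathcal{X}$) to morphisms factoring through $\mathcal{X}$ (resp.\ $\mathcal{X}'$), so that the adjunction descends. The paper phrases the same computation as a well-definedness check for $\widetilde{\eta}$ and its inverse, and then verifies naturality explicitly; your ideal-preservation formulation is equivalent.
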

\begin{proof}
(1)Since $Q_{\mathcal{A}'}i^*(X)$ is null for any $X\in \mathcal{X}$, by Lemma \ref{lem3}
there exists a unique additive functor
 $\widetilde{i^*}:\mathcal{A}/{\mathcal {X}} \rightarrow \mathcal{A}'/\mathcal {X}'$ such that the following
 diagram is commutative
 $$\xymatrix {
 \mathcal{A} \ar[d]_{Q_{\mathcal{A}}} \ar[rr]^{i^*} && \mathcal{A'} \ar[d]^{Q_{\mathcal{A}'}} \\
  \mathcal{A}/{\mathcal {X}} \ar@{.>}[rr]^{\widetilde{i^*}} && \mathcal{A}'/\mathcal {X}'.}
$$

\par
We can prove (2) similarly.
\par
(3) Note that $(i^*, i_*)$ is an adjoint pair ,  there exist two natural
isomorphisms $$
\eta_{A, A'}:\mbox{Hom}_{\mathcal{A'}}(i^*(A), A')\rightarrow
\mbox{Hom}_{\mathcal{A}}(A, i_*(A')), $$ $$
\tau_{A, A'}:\mbox{Hom}_{\mathcal{A}}(A, i_*(A'))\rightarrow
\mbox{Hom}_{\mathcal{A'}}(i^*(A), A') $$
such that $\tau_{A,A'}\eta_{A,A'}=1$ and $\eta_{A,A'}\tau_{A,A'}=1$, where $A\in
\mathcal{A},  A'\in\mathcal{A'}$.
We claim that $\eta_{A, A'}$ induces a natural isomorphism $$
\widetilde{\eta}_{A, A'}:\mbox{Hom}_{\mathcal{A'}/\mathcal {X}'}(\widetilde{i^*}(A), A')\rightarrow
\mbox{Hom}_{\mathcal{A}/{\mathcal{X}}}(A, \widetilde{i_*}(A')).$$
\par  We first define a map $
\widetilde{\eta}_{A, A'}:\mbox{Hom}_{\mathcal{A'}/\mathcal {X}'}(\widetilde{i^*}(A), A')\rightarrow
\mbox{Hom}_{\mathcal{A}/{\mathcal{X}}}(A, \widetilde{i_*}(A')) $. For
any $\overline{f}\in
\mbox{Hom}_{\mathcal{A'}/\mathcal {X}'}(\widetilde{i^*}(A), A')$,
define
$\widetilde{\eta}_{A, A'}(\bar{f}):=\overline{\eta_{A, A'}(f)}$. In
order to show that the definition is reasonable,  it suffices to
prove that $\overline{\eta_{A, A'}(f)}$ does not depend on the choice
of $f$. Let $\overline{f_1}=\overline{f_2}\in
\mbox{Hom}_{\mathcal{A'}/\mathcal {X}'}(\widetilde{i^*}(A), A')$,
i.e.,  there exist two morphisms $s: i^*(A)\rightarrow X$ and $t: X'\rightarrow A'$ such that the following diagram is
commutative $$ \xymatrix{
  {i^*}(A) \ar[rr]^{f_1-f_2} \ar[dr]_{s}    &  &    A'    \\
                & X'\ar[ur]_{t}                  }
$$
where $X'\in \mathcal{X}'$.
It is easy to see that ${\eta}_{A, A'}(f_1)-{\eta}_{A, A'}(f_2)={\eta}_{A, A'}(f_1-f_2)
=i_*(f_1-f_2)\varepsilon_A=i_*(ts)\varepsilon_A=i_*(t)i_*(s)\varepsilon_A$,
where $\varepsilon_A$ is the adjunction morphism.
Hence the following diagram is commutative
$$
\xymatrix{
  A \ar[rr]^{{\eta}_{A, A'}(f_1)-{\eta}_{A, A'}(f_2)} \ar[dr]_{i_*(s)\varepsilon_A}    &  &    i_*(A')    \\
                & i_*(X')\ar[ur]_{i_*(t)}                  }
$$
Since $i_*(\mathcal {X}')\subset \mathcal {X}$, we have  $i_*(X')\in \mathcal{X}$, thus
$\overline{\eta_{A, A'}(f_1)}=\overline{\eta_{A, A'}(f_2)}$. Therefore $\widetilde{\eta}_{A, A'}$ is a morphism
 between $\mbox{Hom}_{\mathcal{A'}/\mathcal {X}'}(\widetilde{i^*}(A), A')$ and
 $\mbox{Hom}_{\mathcal{A}/{\mathcal{X}}}(A, \widetilde{j_*}(A'))$.

 Similarly,  we can define a morphism
$$
\widetilde{\tau}_{A, A'}:\mbox{Hom}_{\mathcal{A}/{\mathcal{X}}}(A, \widetilde{i_*}(A'))\rightarrow
\mbox{Hom}_{\mathcal{A'}/\mathcal {X}'}(\widetilde{i^*}(A), A')
$$
by $\widetilde{\tau}_{A, A'}(g):=\overline{\tau_{A, A'}(g)}$, where $\overline{g}\in \mbox{Hom}_{\mathcal{A}/{\mathcal{X}}}(A, \widetilde{i_*}(A'))$.

Note that $\tau_{A,A'}\eta_{A,A'}=1$ and $\eta_{A,A'}\tau_{A,A'}=1$,
we obtain that $\tilde{\tau}_{A,A'} \tilde{\eta}_{A,A'}=1$ and $\tilde{\eta}_{A,A'} \tilde{\tau}_{A,A'}=1$.
Therefore $\tilde{\eta}_{A,A'}$ is a bijection.
\par
It remains to prove that $\widetilde{\eta}_{A, A'}$ is a natural transformation.
For any morphism  $\overline{h}\in \mbox{Hom}_{\mathcal{A}/{\mathcal{X}}}(B, A)$,
 we claim that the following diagram is commutative
 $$
 \xymatrix {
  \mbox{Hom}_{\mathcal{A'}/\mathcal {X}'}(\widetilde{i^*}(A), A')
  \ar[d]_{(\widetilde{i^*}\bar{h})^* } \ar[rr]^{\widetilde{\eta}_{A, A'}} &&
  \mbox{Hom}_{\mathcal{A}/{\mathcal{X}}}(A, \widetilde{i_*}(A'))  \ar[d]^{\bar{h}^*} \\
  \mbox{Hom}_{\mathcal{A'}/\mathcal {X}'}(\widetilde{i^*}(B), A') \ar[rr]^{\widetilde{\eta}_{B, A'}} &&
  \mbox{Hom}_{\mathcal{A}/{\mathcal{X}}}(B, \widetilde{i_*}(A'))
 }$$
  In fact, for any morphism
$\bar{f}\in\mbox{Hom}_{\mathcal{A'}/\mathcal {X}'}(\widetilde{i^*}(A), A')$,
since $\eta_{A, A'}$ is a natural transformation,
we have $\eta_{A,A'}(f)\cdot h=\eta_{B,A'}(f\cdot i^*h)$. Thus  $$\tilde{\eta}_{A,A'}(\bar{f})\cdot \bar{h}
=
\overline{\eta_{A,A'} (f)\cdot h}=\overline{\eta_{B,A'} (f\cdot i^*(h))}
=\widetilde{\eta}_{B,A'}\overline{ (f\cdot i^*(h))}
=\widetilde{\eta}_{B,A'}{ (\bar{f}\cdot \widetilde{i^*}(\overline{h}))},$$  that is,
$\widetilde{\eta}_{B,A'}\cdot(\widetilde{i^*}\bar{h})^*=\bar{h}^*\cdot\widetilde{\eta}_{A,A'}$. Hence $\widetilde{\eta}_{A, A'}$ is
natural in the first variable.
\par
Using similar arguments as before one can show that $\widetilde{\eta}_{A, A'}$ is  natural in the second variable.
This  finishes the proof.
\end{proof}

Now we can state and prove our main theorem in this section.

 \begin{thm} \label{thm5}
 Let
 $$ \xymatrix @C=2.7pc {
      \mathcal {A}' \ar[r]^{i_*} & \mathcal {A}   \ar@<+2.8ex>[l]_{i^!}\ar@<-2.8ex>[l]_{i^*}\ar[r]^{j^*}
      & \mathcal {A}'' \ar@<+2.8ex>[l]_{j_*}\ar@<-2.8ex>[l]_{j_!}
      }
$$
 be a recollement of additive categories,  $\mathcal {X}$  an additive subcategory of
$\mathcal {A}$ such that $i_*i^*(\mathcal {X})\subset \mathcal {X}$,
$j_*j^*(\mathcal {X})\subset \mathcal {X}$,
$i_*i^!(\mathcal {X})\subset \mathcal {X}$ and
$j_!j^*(\mathcal {X})\subset \mathcal {X}$. Then
\par
(1) the diagram $$ \xymatrix @C=2.7pc {
     i^* (\mathcal {X}) \ar[r]^-{\bar{i}_*} &
      \mathcal {X}   \ar@<+2.8ex>[l]_-{\bar{i}^!}\ar@<-2.8ex>[l]_-{\bar{i}^*}\ar[r]^-{\bar{j}^*}
      & j^*(\mathcal {X}) \ar@<+2.8ex>[l]_-{\bar{j}_*}\ar@<-2.8ex>[l]_-{\bar{j}_!}
      }
$$ is a recollement of additive categories,  where $\bar{i}_*, \bar{i}^!, \bar{i}^*, \bar{j}^*,
 \bar{j}_*$ and $\bar{j}_!$ are  restriction functors of  $i_*, i^!, i^*, j^*, j_*$ and $j_!$ respectively;

\par
(2) there exists a diagram of additive functors $$ \quad \quad \quad\quad \quad \quad \quad \xymatrix @C=2.7pc {
      \mathcal {A}'/i^*(\mathcal{X}) \ar[r]^-{\tilde{i}_*} & \mathcal {A}/\mathcal{X}
      \ar@<+2.8ex>[l]_-{\tilde{i}^!}\ar@<-2.8ex>[l]_-{\tilde{i}^*}\ar[r]^-{\tilde{j}^*}
      & \mathcal {A}''/ j^*(\mathcal{X}) \ar@<+2.8ex>[l]_-{\tilde{j}_*}\ar@<-2.8ex>[l]_-{\tilde{j}_!}}
    \quad \quad \quad \quad \quad\quad \quad  (2.2)
$$ Moreover, the diagram (2.2) is a recollement of additive categories if and only if $\mathcal{X}\subset$ Ker$j^*$.
\end{thm}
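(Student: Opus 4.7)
The plan is to treat (1) and (2) in parallel, since both reduce to understanding how the four adjoint pairs and the image/kernel condition behave under restriction (for (1)) or localisation (for (2)). For (1) I will check (R1)--(R3) directly on subcategories, using the four containment hypotheses together with Remark~\ref{rem}(2) to place the restrictions in the correct targets. For (2) I will invoke Lemma~\ref{lem4} four times to produce the diagram (2.2) together with (R1), and then reduce (R2) and (R3) to elementary manipulations of the ideal $[\mathcal{X}]$.

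For (1), the only subtlety in defining the restrictions is that $\bar{i}^!$, $\bar{j}_!$ and $\bar{j}_*$ must land inside $i^*(\mathcal{X})$ or $\mathcal{X}$ rather than in the ambient $\mathcal{A}'$ or $\mathcal{A}$. Using Remark~\ref{rem}(2) I may assume $i^*i_*=\mathrm{id}$, $i^!i_*=\mathrm{id}$ and $j^*j_*=\mathrm{id}$; then for $X\in\mathcal{X}$ one has $i^!(X)\cong i^*(i_*i^!(X))$, which lies in $i^*(\mathcal{X})$ by the hypothesis $i_*i^!(\mathcal{X})\subset\mathcal{X}$, and analogous arguments using $j_!j^*(\mathcal{X})\subset\mathcal{X}$ and $j_*j^*(\mathcal{X})\subset\mathcal{X}$ handle $\bar{j}_!$ and $\bar{j}_*$. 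The adjunction bijections in (R1) restrict to the required Hom-sets in the subcategories, and the full-embedding property (R2) is inherited. For (R3), the inclusion $\mathrm{Im}\,\bar{i}_*\subset\mathrm{Ker}\,\bar{j}^*$ is inherited from (2.1); conversely, if $X\in\mathcal{X}$ with $j^*X=0$, then $X\cong i_*(A')$ for some $A'\in\mathcal{A}'$, and $A'=i^*i_*(A')=i^*(X)\in i^*(\mathcal{X})$, so $X\in\mathrm{Im}\,\bar{i}_*$.

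For (2), the four hypotheses are exactly what Lemma~\ref{lem4} requires in order to descend each of the four adjoint pairs to the quotients, yielding the diagram (2.2) and (R1) at once. For (R2), I must show that $\tilde{i}_*,\tilde{j}_!,\tilde{j}_*$ remain fully faithful. Taking $\tilde{j}_*$ as the model: under the isomorphism $\mathrm{Hom}_\mathcal{A}(j_*A'',j_*B'')\cong\mathrm{Hom}_{\mathcal{A}''}(A'',B'')$ coming from the full embedding $j_*$ with $j^*j_*=\mathrm{id}$, I identify the ideals $[\mathcal{X}](j_*A'',j_*B'')$ and $[j^*(\mathcal{X})](A'',B'')$. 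The inclusion $[j^*(\mathcal{X})]\subset[\mathcal{X}]$ follows by applying $j_*$ to a factorisation through some $j^*X$ and using $j_*j^*(\mathcal{X})\subset\mathcal{X}$; the reverse inclusion follows by applying $j^*$ to a factorisation through some $Y\in\mathcal{X}$, since $j^*Y\in j^*(\mathcal{X})$. The arguments for $\tilde{j}_!$ and $\tilde{i}_*$ are strictly analogous.

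The ``if'' direction of the final equivalence is now immediate: if $\mathcal{X}\subset\mathrm{Ker}\,j^*$, then $j^*(\mathcal{X})=0$, so $\tilde{j}^*A=0$ iff $j^*A=0$ iff $A\in\mathrm{Im}\,i_*$ (by (R3) for (2.1)), which is equivalent to $A\in\mathrm{Im}\,\tilde{i}_*$ in the quotient. The main obstacle is the ``only if'' direction: from (R3) for (2.2) one must conclude $j^*X=0$ for every $X\in\mathcal{X}$. The natural contrapositive strategy is to assume some $X_0\in\mathcal{X}$ has $j^*X_0\neq 0$ and produce an object $A\in\mathcal{A}$ which lies in $\mathrm{Ker}\,\tilde{j}^*$ but not in $\mathrm{Im}\,\tilde{i}_*$. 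The chief difficulty is that the obvious candidates---such as $A=j_*j^*X_0$ or $A=j_!j^*X_0$---are forced into $\mathcal{X}$ by the very hypotheses of the theorem, and hence collapse to zero in the quotient; identifying a non-collapsing witness, very likely via the adjunction unit $X_0\to j_*j^*X_0$ and the fully faithful behaviour of $\tilde{j}_*$, is where the essential work lies.
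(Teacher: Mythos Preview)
Your treatment of part~(1) and of (R1) in part~(2) matches the paper. There are two points of divergence worth noting.

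For (R2) in part~(2), the paper takes a much shorter route than your ideal comparison. From Remark~\ref{rem}(2) one may arrange $i^!i_*=\mathrm{id}$, $i^*i_*=\mathrm{id}$, $j^*j_!=\mathrm{id}$, $j^*j_*=\mathrm{id}$; passing to quotients via Lemma~\ref{lem4} gives $\tilde{i}^!\tilde{i}_*=\mathrm{id}$, etc., and a right (or left) adjoint with a one-sided inverse is automatically fully faithful. Your direct identification of $[\mathcal{X}](j_*A'',j_*B'')$ with $[j^*(\mathcal{X})](A'',B'')$ is correct, but this work is unnecessary.

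The more serious issue is the ``only if'' direction, which you leave unfinished and flag as ``where the essential work lies''. The paper does \emph{not} construct a witness object as you propose; it argues purely at the level of object classes, in three lines. The point is that the quotient functor $Q_{\mathcal A}$ is the identity on objects, so $\tilde{i}_*$ and $i_*$ have literally the same image on objects, and likewise for $\tilde{j}^*$ and $j^*$. Every $X\in\mathcal X$ is a zero object of $\mathcal A/\mathcal X$, hence lies in $\mathrm{Ker}\,\tilde{j}^*$; assuming $\mathrm{Ker}\,\tilde{j}^*=\mathrm{Im}\,\tilde{i}_*$, one gets $X\in\mathrm{Im}\,\tilde{i}_*$, whose objects are those of $\mathrm{Im}\,i_*=\mathrm{Ker}\,j^*$. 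So your anticipated obstruction---manufacturing a ``non-collapsing'' object via the unit $X_0\to j_*j^*X_0$---never arises in the paper's argument. You should drop the contrapositive witness search and instead look directly at what membership of an $X\in\mathcal X$ in $\mathrm{Im}\,\tilde{i}_*$ forces.
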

 \begin{proof}
(1) (R1) and (R2) are trivial. For (R3), it is clear that  $\bar{j}^*\bar{i}_*=0$,  hence Im$\bar{i}_*\subset$ Ker$\bar{j}^*$.
On the other hand, since Ker$\bar{j}^*$=Ker$j^*\bigcap \mathcal{X}$=Im $i_*\bigcap \mathcal{X}$,  for any $X\in $ Ker$\bar{j}^*$,
  there exists $A'\in \mathcal{A}' $ such that $X$=$i_*(A')$. It follows that $X=i_*(A')= i_* i^* i_*(A')=i_*i^*(X)\in$
 Im$\bar{i}_*$.
 This shows that Im$\bar{i}_*=$ Ker$\bar{j}^*$.

 \par
 (2) By Lemma \ref{lem4}, the six functors $i^*, i_*, i^!, j_!, j^*, j_*$ induce six additive
functors  $\tilde{i^*}, \tilde{i_*}, \tilde{i^!}, \tilde{j_!}, \tilde{j^*}, \tilde{j_*}$,  respectively.
Furthermore,  $(\tilde{i^*}, \tilde{i_*})$, $(\tilde{i_*}, \tilde{i^!})$,  $(\tilde{j_!}, \tilde{j^*})$  and  $(\tilde{j^*}, \tilde{j_*})$ are adjoint pairs.
Since $i_*$ is a full embedding, we may assume that  $i^!i_*=
id_{\mathcal{A}'}$. Thus $\tilde{i^!}\tilde{i_*}=
id_{\mathcal{A}'/i^*{\mathcal(X)}}$. It follows that $\tilde{i_*}$
is a full embedding. Similarly,  $\tilde{j_!}, \tilde{j_*}$ are full
embeddings. Since $j^\ast i_\ast=0$, we have $\widetilde{j}^\ast\widetilde{i}_\ast=0$, which implies that Im$\widetilde{i}_\ast\subset\mbox{Ker}\widetilde{j}^\ast$. To end the proof, it remains to prove that $\mbox{Ker}\widetilde{j}^\ast=\mbox{Im}\widetilde{i}_\ast$ if and only if $\mathcal{X} \subset$ Ker$j^*$.
\par
In fact, if  $\mathcal{X} \subset$ Ker$j^*$,  we have
Ker$\widetilde{j}^*=j^{*-1}(j^*(\mathcal{X}))/\mathcal{X}$=Ker${j}^*/\mathcal{X}$=Im${i}_*/\mathcal{X}$=Im$\widetilde{i}_*$.
On the other hand, assume that Im$\widetilde{i}_*$=Ker$\widetilde{j}^*$.
 By the definition of quotient category and that of quotient functor,
 Im$\widetilde{i}_*$ and Im$i_*$ have the same object,  and the objects of $\mathcal{X}$ is a subclass of the objects of Ker$\widetilde{j}^*$.
 Since Im${i}_*$=Ker${j}^*$,   the objects of $\mathcal{X}$ is a subclass
 of the objects of Ker${j}^*$. Therefore, $\mathcal{X}\subset$ Ker$j^*$.
\par

This finishes the proof.
\end{proof}

 \begin{cor} \label{cor6}
  Let
 $$ \xymatrix @C=2.7pc {
      \mathcal {A}' \ar[r]^{i_*} & \mathcal {A}   \ar@<+2.8ex>[l]_{i^!}\ar@<-2.8ex>[l]_{i^*}\ar[r]^{j^*}
      & \mathcal {A}'' \ar@<+2.8ex>[l]_{j_*}\ar@<-2.8ex>[l]_{j_!}
      }
$$
 be a recollement of additive categories. Let $\mathcal {X}'$ be a subcategory of $\mathcal {A}'$ and $\mathcal {X}''$ a subcategory of $\mathcal {A}''$, satisfying $i^*j_*(\mathcal {X}'')\subset \mathcal {X}'$ and $i^!j_!(\mathcal {X}'')\subset \mathcal {X}'$. If  $\mathcal {X}=\{X\in \mathcal {A}|j^*(X)\in \mathcal {X}'', i^*(X)\in \mathcal {X}', i^!(X)\in \mathcal {X}'\}$,
Then $$ \xymatrix @C=2.7pc {
     \mathcal {X}' \ar[r]^{\bar{i}_*} &
      \mathcal {X}   \ar@<+2.8ex>[l]_{\bar{i}^!}\ar@<-2.8ex>[l]_{\bar{i}^*}\ar[r]^{\bar{j}^*}
      & \mathcal {X}'' \ar@<+2.8ex>[l]_{\bar{j}_*}\ar@<-2.8ex>[l]_{\bar{j}_!}
      }
$$ is a recollement of additive categories,  where $\bar{i}_*, \bar{i}^!, \bar{i}^*, \bar{j}^*,
 \bar{j}_*$ and $\bar{j}_!$ are the restriction functors of  $i_*, i^!, i^*, j^*, j_*$ and $j_!$, respectively.
 \end{cor}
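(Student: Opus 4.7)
The plan is to reduce the corollary to Theorem~\ref{thm5}(1) applied to the subcategory $\mathcal{X}$ of $\mathcal{A}$. The main auxiliary fact I will need is that $i^!j_* = 0$ and $i^*j_! = 0$, both of which follow from (R1) and (R3) in the additive setting: for any $A'' \in \mathcal{A}''$ and $A' \in \mathcal{A}'$,
$$\mathrm{Hom}_{\mathcal{A}'}(A', i^!j_* A'') \cong \mathrm{Hom}_{\mathcal{A}}(i_* A', j_* A'') \cong \mathrm{Hom}_{\mathcal{A}''}(j^* i_* A', A'') = 0,$$
and taking $A' = i^!j_* A''$ forces $i^! j_* A'' = 0$; the symmetric argument using $(j_!, j^*)$ and $(i^*, i_*)$ gives $i^* j_! = 0$.

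Next I would verify the four closure hypotheses of Theorem~\ref{thm5}(1) for the chosen $\mathcal{X}$. Take $X \in \mathcal{X}$. For $i_* i^*(X)$, note that $j^*(i_* i^* X) = 0$, while $i^*(i_* i^* X) = i^* X$ and $i^!(i_* i^* X) = i^* X$ (using $i^*i_* = i^!i_* = \mathrm{id}$), all of which lie in the prescribed subcategories; the argument for $i_* i^!(X)$ is identical. For $j_* j^*(X)$, one has $j^*(j_* j^* X) = j^* X \in \mathcal{X}''$, $i^*(j_* j^* X) = i^* j_*(j^* X) \in \mathcal{X}'$ by the hypothesis $i^* j_*(\mathcal{X}'') \subset \mathcal{X}'$, and $i^!(j_* j^* X) = 0 \in \mathcal{X}'$ by the vanishing established above. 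The verification for $j_! j^*(X)$ is symmetric, using $i^* j_! = 0$ and the hypothesis $i^! j_!(\mathcal{X}'') \subset \mathcal{X}'$. Thus Theorem~\ref{thm5}(1) yields a recollement of $\mathcal{X}$ relative to $i^*(\mathcal{X})$ and $j^*(\mathcal{X})$ via the restricted functors.

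It then remains to identify $i^*(\mathcal{X}) = \mathcal{X}'$ and $j^*(\mathcal{X}) = \mathcal{X}''$ as subcategories. The inclusions $i^*(\mathcal{X}) \subset \mathcal{X}'$ and $j^*(\mathcal{X}) \subset \mathcal{X}''$ are immediate from the definition of $\mathcal{X}$. Conversely, given $X' \in \mathcal{X}'$, the object $i_*(X')$ lies in $\mathcal{X}$ because $j^* i_* X' = 0$ and $i^* i_* X' = i^! i_* X' = X' \in \mathcal{X}'$; since $i^*(i_* X') = X'$, we get $X' \in i^*(\mathcal{X})$. Given $X'' \in \mathcal{X}''$, the object $j_*(X'')$ lies in $\mathcal{X}$ because $j^* j_* X'' = X'' \in \mathcal{X}''$, $i^* j_* X'' \in \mathcal{X}'$ by hypothesis, and $i^! j_* X'' = 0$ by the vanishing above; since $j^*(j_* X'') = X''$, we obtain $X'' \in j^*(\mathcal{X})$.

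The only conceptual input beyond the assumed $\mathcal{X}$-closures is the vanishing $i^!j_* = 0$ and $i^*j_! = 0$, which is precisely what allows $\mathcal{X}$ to absorb $j_* j^*(X)$ and $j_! j^*(X)$ through the $i^!$- and $i^*$-components. Once that vanishing is recorded, the rest of the proof is routine bookkeeping with the closure conditions of Theorem~\ref{thm5}(1), so I do not anticipate any serious obstacle.
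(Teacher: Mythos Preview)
Your proposal is correct and follows essentially the same route as the paper: reduce to Theorem~\ref{thm5}(1) by checking the four closure conditions on $\mathcal{X}$, and then identify $i^*(\mathcal{X})=\mathcal{X}'$ and $j^*(\mathcal{X})=\mathcal{X}''$ using $i_*(\mathcal{X}')\subset\mathcal{X}$ and $j_*(\mathcal{X}'')\subset\mathcal{X}$. The only notable difference is that you make explicit the vanishings $i^!j_*=0$ and $i^*j_!=0$, which the paper uses implicitly when it writes ``similarly, we can show that $j_*(\mathcal{X}'')\subset\mathcal{X}$'' and ``it is easy to check that $j_!(\mathcal{X}'')\subset\mathcal{X}$''; your explicit derivation of these from the adjunctions and (R3) is a welcome clarification.
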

  \begin{proof}
    For every $X'\in \mathcal {X}'$, since $j^*i_*(X')=0\in \mathcal {X}''$, $i^*i_*(X')= X'\in \mathcal {X}'$,
and  $i^!i_*(X')= X'\in \mathcal {X}'$, we obtain that $i_*(\mathcal {X}')\subset \mathcal {X}$.
Thus $\mathcal {X}'=i^*i_*(\mathcal {X}')\subset i^*(\mathcal {X})$.
 On the other hand,  by definition of $\mathcal {X}$, $i^*(\mathcal {X})\subset \mathcal {X}'$,
 so $i^*(\mathcal {X})= \mathcal {X}'$.
 Similarly, we can show that $j_*(\mathcal {X}'')\subset \mathcal {X}$ and $j^*(\mathcal {X})= \mathcal {X}''$.
\par
Now we have $i_*i^*(\mathcal {X})=i_*(\mathcal {X}')\subset \mathcal {X}$,
 $j_*j^*(\mathcal {X})=j_*(\mathcal {X}'')\subset \mathcal {X}$
 and $i_*i^!(\mathcal {X})\subset i_*(\mathcal {X}')\subset \mathcal {X}$.
  It is sufficient to show that $j_!j^*(\mathcal {X})\subset \mathcal {X}$ by Theorem \ref{thm5}(1).
   It is easy to check that $j_!(\mathcal {X}'')\subset \mathcal {X}$.
   Therefore, $j_!j^*(\mathcal {X})=j_!(\mathcal {X}'')\subset \mathcal {X}$.
   This finishes the proof.
   \end{proof}

 \begin{cor}\label{cor7}
 Let
 $$ \xymatrix @C=2.7pc {
      \mathcal {A}' \ar[r]^{i_*} & \mathcal {A}   \ar@<+2.8ex>[l]_{i^!}\ar@<-2.8ex>[l]_{i^*}\ar[r]^{j^*}
      & \mathcal {A}'' \ar@<+2.8ex>[l]_{j_*}\ar@<-2.8ex>[l]_{j_!}
      }
$$
 be a recollement of additive categories, $\mathcal {X}'$  a subcategories of $\mathcal {A}'$.
Then$$ \xymatrix @C=2.7pc {
      \mathcal {A}'/\mathcal{X}' \ar[r]^{\widetilde{i}_*} & \mathcal {A}/i_*(\mathcal{X}')
      \ar@<+2.8ex>[l]_{\widetilde{i}^!}\ar@<-2.8ex>[l]_{\widetilde{i}^*}\ar[r]^-{\widetilde{j}^*}
      & \mathcal {A}''\ar@<+2.8ex>[l]_-{\widetilde{j}_*}\ar@<-2.8ex>[l]_-{\widetilde{j}_!}
      }
$$ is a recollement of additive categories.
 \end{cor}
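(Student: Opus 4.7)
The plan is to deduce this corollary as a direct application of Theorem \ref{thm5}(2), taking $\mathcal{X} := i_*(\mathcal{X}')$ as the distinguished subcategory of $\mathcal{A}$. So the bulk of the work is to verify that this $\mathcal{X}$ satisfies the four stability hypotheses of Theorem \ref{thm5}, together with the additional condition $\mathcal{X} \subset \mbox{Ker}\, j^*$ needed for part (2), and then to identify the terms $i^*(\mathcal{X})$ and $j^*(\mathcal{X})$ appearing in the output diagram with $\mathcal{X}'$ and $0$ respectively.

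First I would invoke Remark \ref{rem}(2) to assume $i^*i_* = id$ and $i^!i_* = id$. For a typical object $X = i_*(X')$ of $\mathcal{X}$ with $X' \in \mathcal{X}'$, this immediately gives $i_*i^*(X) = i_*i^*i_*(X') = i_*(X') = X \in \mathcal{X}$ and similarly $i_*i^!(X) = X \in \mathcal{X}$. For the other two conditions I would use (R3): since $\mbox{Im}\, i_* = \mbox{Ker}\, j^*$, we have $j^*(X) = j^*i_*(X') = 0$, which at once gives $j_*j^*(X) = 0 \in \mathcal{X}$ and $j_!j^*(X) = 0 \in \mathcal{X}$; this same computation also shows $\mathcal{X} \subset \mbox{Ker}\, j^*$, so the extra hypothesis of Theorem \ref{thm5}(2) is satisfied as well.

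Having checked the hypotheses, Theorem \ref{thm5}(2) produces a recollement
$$\xymatrix @C=2.7pc {
      \mathcal {A}'/i^*(\mathcal{X}) \ar[r]^-{\tilde{i}_*} & \mathcal {A}/\mathcal{X}
      \ar@<+2.8ex>[l]_-{\tilde{i}^!}\ar@<-2.8ex>[l]_-{\tilde{i}^*}\ar[r]^-{\tilde{j}^*}
      & \mathcal {A}''/ j^*(\mathcal{X}) \ar@<+2.8ex>[l]_-{\tilde{j}_*}\ar@<-2.8ex>[l]_-{\tilde{j}_!}}
$$
and it remains only to rewrite the outer terms. By the normalization above, $i^*(\mathcal{X}) = i^*i_*(\mathcal{X}') = \mathcal{X}'$, so the left term is exactly $\mathcal{A}'/\mathcal{X}'$. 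On the right, $j^*(\mathcal{X}) = j^*i_*(\mathcal{X}') = 0$, the zero subcategory, and quotienting by the zero subcategory does not change the category, so the right term is just $\mathcal{A}''$. This yields precisely the diagram in the statement.

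I do not anticipate a genuine obstacle: the entire proof is a bookkeeping exercise. The only subtle point is the identification $\mathcal{A}''/j^*(\mathcal{X}) = \mathcal{A}''$, which requires noting that $j^*(\mathcal{X})$ consists only of zero objects (since $j^* i_* = 0$) and that morphisms factoring through a zero object are already zero in $\mathcal{A}''$, so the quotient functor is the identity. Everything else reduces to the two applications $i^*i_* = id$ and $j^*i_* = 0$.
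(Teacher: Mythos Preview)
Your proposal is correct and follows essentially the same approach as the paper: set $\mathcal{X}=i_*(\mathcal{X}')$, verify the four stability conditions of Theorem~\ref{thm5} together with $\mathcal{X}\subset\mbox{Ker}\,j^*$, and then read off the identifications $i^*(\mathcal{X})=\mathcal{X}'$ and $j^*(\mathcal{X})=0$. The paper's proof is more terse, but the argument is identical.
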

 \begin{proof}
If we take $\mathcal {X}=i_*(\mathcal{X}')$, then $j^*\mathcal{X}=0$,
 $i_*i^*(\mathcal {X})=i_*i^!(\mathcal {X})= \mathcal {X}$ and
$j_*j^*(\mathcal {X})=j_!j^*(\mathcal {X})=0$. The result immediately follows from Theorem \ref{thm5}(2).
\end{proof}
\section{Recollement of triangulated quotient categories induced by mutation pairs}
 Let $\mathcal{D}$ be a subcategory of an additive category $\mathcal{C}$. A morphism
$f:A\rightarrow B$ in $\mathcal{C}$ is called $\mathcal{D}$-$epic$, if for any $D\in\mathcal{D}$, the sequence
$\xymatrix@C=0.5cm{\mbox{Hom}_{\mathcal{C}}(D,A) \ar[rr]^{f_\ast} && \mbox{Hom}_{\mathcal{C}}(D,B) \ar[r] & 0 }$
is exact. A $right$ $\mathcal{D}$-$approximation$ of $X$
in  $\mathcal{C}$ is a $\mathcal{D}$-epic map $f:D\rightarrow X$, with $D\in\mathcal{D}$.  The subcategory $\mathcal{D}$ is said to be a $contravariantly\ finite$  if any object $A$ of $\mathcal{C}$ has a right $\mathcal{X}$-approximation.  We can defined $\mathcal{D}$-$monic$ morphism, $left$ $\mathcal{D}$-$approximation$ and $covariantly\ finite\ subcategory$ dually.  The subcategory $\mathcal{D}$ is called {\em functorially finite} if it is both contravariantly finite and covariantly finite.

\begin{defn}(cf.\cite{[IY]},\cite{[LZ]}) \label{defn2} Let $\mathcal{C}$ be a triangulated category,  and $\mathcal{D}\subseteq \mathcal{Z}$
  be subcategories of $\mathcal{C}$. Then the pair $(\mathcal{Z}, \mathcal{Z})$ is called a $\mathcal{D}$-$mutation\ pair$ if the following conditions are satisfied.

(1) For any object $X\in \mathcal{Z}$,  there exists a triangle
  $$\xymatrix@C=0.5cm{
  X \ar[r]^{f} & D \ar[r]^{g} & Y \ar[r]^{h} & TX  }$$
  where  $Y\in \mathcal{Z}$, $D\in\mathcal{D}$, $f$ is a left $\mathcal{D}$-approximation and $g$ is a
  right $\mathcal{D}$-approximation.
 \par
  (2) For any object $Y\in \mathcal{Z}$,  there exists a triangle
  $$\xymatrix@C=0.5cm{
  X \ar[r]^{f} & D \ar[r]^{g} & Y \ar[r]^{h} & TX  }$$
  where  $X\in \mathcal{Z}$, $D\in\mathcal{D}$, $f$ is a left $\mathcal{D}$-approximation and $g$ is a
  right $\mathcal{D}$-approximation.
\end{defn}

 \begin{lem}\label{lem3.4}
 Let $\mathcal{C}$, $\mathcal{C}'$ be triangulated categories, and $F:\mathcal{C}\rightarrow \mathcal{C}'$  a full and exact functor.  If $\mathcal{D}\subset \mathcal{Z}$  are subcategories of $\mathcal{C}$, and  $(\mathcal{Z}, \mathcal{Z})$ is a $\mathcal{D}$-mutation  pair,
 then  $(F\mathcal{Z}, F\mathcal{Z})$ is a $F\mathcal{D}$-mutation  pair.
\end{lem}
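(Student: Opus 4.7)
The plan is to transfer mutation triangles from $\mathcal{C}$ to $\mathcal{C}'$ using exactness of $F$, and to verify the approximation conditions using fullness of $F$. For any object of $F\mathcal{Z}$ of the form $FX$ with $X\in\mathcal{Z}$, I would invoke Definition 3.1(1) to obtain a triangle
$$\xymatrix@C=0.5cm{X \ar[r]^{f} & D \ar[r]^{g} & Y \ar[r]^{h} & TX}$$
in $\mathcal{C}$ with $Y\in\mathcal{Z}$, $D\in\mathcal{D}$, $f$ a left $\mathcal{D}$-approximation and $g$ a right $\mathcal{D}$-approximation. Since $F$ is exact, applying it yields a triangle
$$\xymatrix@C=0.5cm{FX \ar[r]^{Ff} & FD \ar[r]^{Fg} & FY \ar[r]^-{Fh} & T'FX}$$
in $\mathcal{C}'$ with $FD\in F\mathcal{D}$ and $FY\in F\mathcal{Z}$.

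The core step is to check that $Ff$ is a left $F\mathcal{D}$-approximation of $FX$ and $Fg$ is a right $F\mathcal{D}$-approximation of $FY$. Given an arbitrary $D'\in\mathcal{D}$ and a morphism $\alpha:FX\to FD'$ in $\mathcal{C}'$, the fullness of $F$ provides a lift $\beta:X\to D'$ with $F\beta=\alpha$. Since $f$ is a left $\mathcal{D}$-approximation and $D'\in\mathcal{D}$, there exists $\gamma:D\to D'$ with $\beta=\gamma f$, so applying $F$ gives $\alpha=F\gamma\circ Ff$. Hence $Ff$ is a left $F\mathcal{D}$-approximation. The statement for $Fg$ is dual: lift any $FD'\to FY$ to $D'\to Y$, factor it through $g$, and apply $F$. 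Condition (2) of Definition 3.1 is verified by the identical argument starting from a triangle provided by Definition 3.1(2) for an object $FY\in F\mathcal{Z}$.

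There is no serious obstacle here; the argument is essentially a transport of structure. The two hypotheses on $F$ are used in complementary ways: exactness gives the candidate triangle in $\mathcal{C}'$, while fullness is exactly what is needed to lift test morphisms in $\mathcal{C}'$ back to $\mathcal{C}$ so that the approximation property of the original $f$ and $g$ can be applied. The only subtlety worth noting is the ambient convention that subcategories are closed under isomorphisms, direct sums and direct summands; one takes $F\mathcal{Z}$ and $F\mathcal{D}$ to be generated in this sense, and the direct sum of mutation triangles remains a mutation triangle, so the verification on objects of the form $FX$ suffices.
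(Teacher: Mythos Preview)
Your proof is correct and follows essentially the same approach as the paper: apply exactness of $F$ to transport the mutation triangle, then use fullness to lift test morphisms and verify the approximation conditions. Your added remark about the closure conventions for $F\mathcal{Z}$ and $F\mathcal{D}$ is a reasonable clarification, but otherwise the arguments coincide.
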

\begin{proof}
 For any object $FX\in F\mathcal{Z}$, where $X\in\mathcal{\mathcal{Z}}$,  there exists a triangle in $\mathcal{C}$
  $$\xymatrix@C=0.5cm{
  X \ar[r]^{f} & D \ar[r]^{g} & Y \ar[r]^{h} & TX  }$$
  where  $Y\in \mathcal{Z}$, $D\in\mathcal{D}$, $f$ is a left $\mathcal{D}$-approximation and $g$ is a
  right $\mathcal{D}$-approximation.
 Since $F$ is exact,  we have a triangle in $\mathcal{C}'$
  $$\xymatrix@C=0.5cm{
  FX \ar[r]^{Ff} & FD \ar[r]^{Fg} & FY \ar[r]^{ } & TFX.}$$
  Let $u'\in \mbox{Hom}_{\mathcal{C}'}(FX, FD')$. Since $F$ is full, there is a morphism
  $u\in \mbox{Hom}_{\mathcal{C}}(X, D')$ such that $Fu=u'$. Since $f$ is  a left $\mathcal{D}$-approximation,
  there is a morphism $d\in \mbox{Hom}_{\mathcal{C}}(D, D')$, such that $u=df$.
  Hence $u'=Fu=Fd Ff$. Then $Ff$ is a left $F\mathcal{D}$-approximation.
  Similarly, $Fg$ is a right $F\mathcal{D}$-approximation.
  This shows that $(F\mathcal{Z}, F\mathcal{Z})$ satisfies condition (1) of Definition \ref{defn2}.
  Similarly, we can show that it also satisfies condition (2) of Definition \ref{defn2}.
  Therefore $(F\mathcal{Z}, F\mathcal{Z})$ is a $F\mathcal{D}$-mutation  pair. 
This finishes the proof.
\end{proof}

\par
Let $(\mathcal{Z}, \mathcal{Z})$ be a $\mathcal{D}$-mutation pair. For any object $X\in\mathcal{Z}$, fix a triangle
  $$\xymatrix@C=0.5cm{
  X \ar[r]^{\alpha_X} & D_X \ar[r]^{\beta_X} & M \ar[r]^{\gamma_X} & TX  }$$
  where  $M\in \mathcal{Z}$, $D_X\in\mathcal{D}$, $\alpha_X$ is a left $\mathcal{D}$-approximation and $\beta_X$ is a
  right $\mathcal{D}$-approximation. We can define an equivalent functor $\sigma: \mathcal{Z}/\mathcal{D}\rightarrow\mathcal{Z}/\mathcal{D}$ such that $\sigma X=M$.
Let $\xymatrix@C=0.5cm{X \ar[r]^{f} & Y \ar[r]^{g} & Z \ar[r]^{h} & TX  }$ be a triangle in $\mathcal{C}$ with $X,Y,Z\in\mathcal{Z}$, where $f$ is $\mathcal{D}$-monic.
Then there exists a commutative diagram where rows are triangles.
$$\xymatrix {
X\ar@{=}[d] \ar[r]^{f} & Y\ar[d]^y \ar[r]^{g} & Z\ar[d]^z \ar[r]^{h} & TX\ar@{=}[d]\\
X \ar[r]^{\alpha_X} & D_X \ar[r]^{\beta_X} & \sigma X \ar[r]^{\gamma_X }& TX
 }$$
We have the following sextuple in the quotient category $\mathcal{Z}/\mathcal{D}$
$$(*) \hskip 10pt \xymatrix@C=0.5cm{X \ar[r]^{\overline{f}} & Y \ar[r]^{\overline{g}} & Z \ar[r]^{\overline{z}} & \sigma X  }$$
We define $\Phi$ the class of triangles in $\mathcal{Z}/\mathcal{D}$ as the sextuples which are isomorphic to $(*)$.
With notation as above, we have the following lemma.

\begin{lem}(\cite{[IY]})\label{lem5}
Let $(\mathcal{Z}, \mathcal{Z})$ be a $\mathcal{D}$-mutation pair in a triangulated category $\mathcal{C}$. If $\mathcal{Z}$ is extension-closed, then $(\mathcal{Z}/\mathcal{D},\sigma, \Phi)$ is a triangulated category.
\end{lem}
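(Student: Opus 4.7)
The plan is to follow the strategy of Iyama--Yoshino, whose result this is. The starting point is to make the would-be suspension $\sigma$ genuinely functorial on $\mathcal{Z}/\mathcal{D}$. For objects, the choice of triangle $X\to D_X\to \sigma X\to TX$ is arbitrary, so I first show that any two such triangles for the same $X$ give canonically isomorphic objects in $\mathcal{Z}/\mathcal{D}$: given a second triangle $X\to D_X'\to N\to TX$ with the same properties, the left $\mathcal{D}$-approximation property of $\alpha_X$ yields morphisms $D_X\to D_X'$ and $D_X'\to D_X$ over $X$, and completing to morphisms of triangles produces maps $\sigma X\leftrightarrows N$ whose compositions differ from the identity by maps factoring through $\mathcal{D}$. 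To define $\sigma$ on morphisms, given $\overline{f}:X\to Y$, lift to $f:X\to Y$ in $\mathcal{C}$, use that $\alpha_Y f$ factors through $\alpha_X$ (by the left $\mathcal{D}$-approximation property of $\alpha_X$), and complete to a morphism of triangles; the induced $\sigma X\to \sigma Y$ is independent of all choices modulo $\mathcal{D}$. The quasi-inverse $\sigma^{-1}$ is constructed dually from condition (2) of Definition \ref{defn2}, showing $\sigma$ is an autoequivalence.

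Next I would verify that the class $\Phi$ is well-defined, i.e.\ that the sextuple $(\ast)$ does not depend on the choice of $y,z$ filling the diagram. This again uses that $D_X\in\mathcal{D}$ together with the left $\mathcal{D}$-approximation property of $\alpha_X$: any two choices of $y$ differ by a map $Y\to D_X$ that factors through $g$ (up to a term into $\mathcal{D}$), and propagating this through TR3 of $\mathcal{C}$ modifies $z$ by a morphism factoring through $\mathcal{D}$. Then $\Phi$ being closed under isomorphism is automatic from the definition.

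For the triangulated axioms I would proceed in the order TR1, TR2, TR3, TR4. TR1 for a $\mathcal{D}$-monic morphism $f:X\to Y$ in $\mathcal{Z}$ is direct: since $\mathcal{Z}$ is extension-closed the cone $Z$ of $f$ lies in $\mathcal{Z}$, and $f$ being $\mathcal{D}$-monic lets one build the square against $\alpha_X,\beta_X$ as in $(\ast)$. For an arbitrary $\overline{f}:X\to Y$ in $\mathcal{Z}/\mathcal{D}$ one replaces $f$ by $(f,\alpha_X):X\to Y\oplus D_X$, which is $\mathcal{D}$-monic and equal to $\overline{f}$ in the quotient. TR2 (rotation) is the delicate step: given a standard triangle as in $(\ast)$ one must show that $Y\to Z\to \sigma X\to \sigma Y$ belongs to $\Phi$, which requires producing an octahedron relating $\alpha_Y$ with the composition $\alpha_X g^{-1}$ (in a homotopical sense) and exploiting extension-closure of $\mathcal{Z}$ to keep the new third terms in $\mathcal{Z}$. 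TR3 follows from the corresponding axiom in $\mathcal{C}$ together with the $\mathcal{D}$-approximation properties to make intermediate choices compatible in the quotient.

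The main obstacle is TR4 (the octahedral axiom). Given composable $\mathcal{D}$-monic $f:X\to Y$ and $g:Y\to Z$ in $\mathcal{Z}$, I would form the three standard triangles in $\mathcal{C}$, apply the octahedral axiom there, and then check that each of the four resulting faces lies in $\Phi$. The non-trivial point is that the vertical middle term of the octahedron must be recognised, modulo $\mathcal{D}$, as the middle term built from the mutation triangle of $X$, and this identification requires repeated invocation of the left and right $\mathcal{D}$-approximation properties to modify the connecting maps by factors through $\mathcal{D}$. Extension-closure of $\mathcal{Z}$ is used throughout to guarantee that all objects produced by these completions remain in $\mathcal{Z}$, which is what makes the entire construction take place inside $\mathcal{Z}/\mathcal{D}$.
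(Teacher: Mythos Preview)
The paper does not supply its own proof of this lemma; it simply cites Iyama--Yoshino \cite{[IY]}. Your proposal sketches the Iyama--Yoshino argument, which is the correct and standard route.

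One point in your TR1 step deserves tightening. From a $\mathcal{D}$-monic $f:X\to Y$ with $X,Y\in\mathcal{Z}$, the cone $Z$ is not \emph{directly} an extension of objects of $\mathcal{Z}$: the triangle $X\to Y\to Z\to TX$ exhibits $Y$, not $Z$, as the middle term. The actual argument is that the morphism of triangles
\[
\xymatrix{
X\ar@{=}[d]\ar[r]^{f} & Y\ar[d]\ar[r] & Z\ar[d]\ar[r] & TX\ar@{=}[d]\\
X\ar[r]^{\alpha_X} & D_X\ar[r] & \sigma X\ar[r] & TX
}
\]
is a homotopy pushout, so (by the octahedral axiom or the $3\times 3$ lemma) there is a triangle $Y\to Z\oplus D_X\to \sigma X\to TY$. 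Now $Y,\sigma X\in\mathcal{Z}$ and extension-closure give $Z\oplus D_X\in\mathcal{Z}$, and closure of $\mathcal{Z}$ under direct summands (which is a standing assumption in the paper) yields $Z\in\mathcal{Z}$. With this correction your outline is sound.
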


Now assume that $(\mathcal{C}, \mathcal{C})$ is a $\mathcal{D}$-mutation pair in a triangulated category $\mathcal{C}$. Then $\mathcal{C}/\mathcal{D}$ is a triangulated category by Lemma \ref{lem5}.
Let  $F:\mathcal{C}\rightarrow \mathcal{C}'$ be an exact functor between triangulated categories.
  If $F$ is full and dense, then $(\mathcal{C}', \mathcal{C}')$ is a $F\mathcal{D}$-mutation pair by Lemma \ref{lem3.4}, thus $\mathcal{C}'/F{\mathcal {D}}$ is also a triangulated category by Lemma \ref{lem5}.
It is easy to see that $F$ induces an additive functor $\widetilde{F}:\mathcal{C}/\mathcal{D} \rightarrow \mathcal{C}'/F\mathcal{D}$
 satisfying the following  commutative  diagram
 $$\xymatrix {
 \mathcal{C} \ar[d]_{Q_{\mathcal{C}}} \ar[rr]^{F} && \mathcal{C'} \ar[d]^{Q_{\mathcal{C}'}} \\
  \mathcal{C}/{\mathcal {D}} \ar[rr]^{\widetilde{F}} && \mathcal{C}'/F{\mathcal {D}}.  }
$$
Moreover, we have the following lemma.
\begin{lem}\label{lem3.6} The functor
$\widetilde{F}:\mathcal{C}/\mathcal{D} \rightarrow \mathcal{C}'/F\mathcal{D}$ is exact.
\end{lem}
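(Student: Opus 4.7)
By definition of an exact functor between triangulated categories, I must produce a natural isomorphism $\eta : \widetilde{F}\sigma \Rightarrow \sigma'\widetilde{F}$, where $\sigma'$ denotes the shift on $\mathcal{C}'/F\mathcal{D}$, and then verify that $\widetilde{F}$, combined with $\eta$, sends every sextuple in $\Phi$ to a sextuple in $\Phi'$.

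To construct $\eta_X : \widetilde{F}\sigma X = F\sigma X \to \sigma'FX = \sigma'\widetilde{F}X$, apply $F$ to the chosen triangle
$X \xrightarrow{\alpha_X} D_X \xrightarrow{\beta_X} \sigma X \xrightarrow{\gamma_X} TX$ defining $\sigma X$. Since $F$ is exact, this yields a triangle
$FX \xrightarrow{F\alpha_X} FD_X \xrightarrow{F\beta_X} F\sigma X \xrightarrow{F\gamma_X} TFX$ in $\mathcal{C}'$, and the fullness argument of Lemma \ref{lem3.4} shows $F\alpha_X$ is a left $F\mathcal{D}$-approximation and $F\beta_X$ a right one. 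The chosen triangle $FX \xrightarrow{\alpha'_{FX}} D'_{FX} \xrightarrow{\beta'_{FX}} \sigma'FX \xrightarrow{\gamma'_{FX}} TFX$ defining $\sigma'FX$ has the same approximation properties, so the standard comparison-of-approximations argument (factor $\alpha'_{FX}$ through $F\alpha_X$, and conversely, then complete each lift to a morphism of triangles) produces mutually inverse morphisms $F\sigma X \leftrightarrows \sigma'FX$ in $\mathcal{C}'/F\mathcal{D}$. I take $\eta_X$ to be the resulting isomorphism.

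For the second point, let a triangle in $\Phi$ arise from the defining commutative diagram
$$\xymatrix{
X \ar@{=}[d] \ar[r]^{f} & Y \ar[d]^{y} \ar[r]^{g} & Z \ar[d]^{z} \ar[r]^{h} & TX \ar@{=}[d]\\
X \ar[r]^{\alpha_X} & D_X \ar[r]^{\beta_X} & \sigma X \ar[r]^{\gamma_X} & TX}$$
with $f$ being $\mathcal{D}$-monic. Applying the exact functor $F$ gives the corresponding diagram in $\mathcal{C}'$, whose top row is a triangle and whose top-left morphism $Ff$ is $F\mathcal{D}$-monic by the same fullness trick as in Lemma \ref{lem3.4}. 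Splicing this diagram with $\eta_X : F\sigma X \to \sigma'FX$ exhibits the image sextuple $\widetilde{F}(*)$ as one of the defining shape for $\Phi'$, hence as an element of $\Phi'$. Since $\Phi$ consists of sextuples isomorphic to a $(*)$, and $\widetilde{F}$ preserves isomorphisms, every triangle of $\Phi$ is sent to one of $\Phi'$.

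The main obstacle will be verifying naturality of $\eta$: for every $\overline{u} : X \to Y$ in $\mathcal{C}/\mathcal{D}$, the square
$$\xymatrix{
\widetilde{F}\sigma X \ar[r]^{\eta_X} \ar[d]_{\widetilde{F}\sigma\overline{u}} & \sigma'\widetilde{F}X \ar[d]^{\sigma'\widetilde{F}\overline{u}}\\
\widetilde{F}\sigma Y \ar[r]^{\eta_Y} & \sigma'\widetilde{F}Y}$$
must commute in $\mathcal{C}'/F\mathcal{D}$. Lifting $\overline{u}$ to $u \in \mathcal{C}$, the two composites become explicit morphisms in $\mathcal{C}'$ built from $F\alpha_X, F\alpha_Y, \alpha'_{FX}, \alpha'_{FY}$; their difference fits into a diagram whose commutativity is controlled by the left-approximation property, and a diagram chase shows the difference factors through an object of $F\mathcal{D}$, hence vanishes in the quotient. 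This diagram chase, together with the analogous bookkeeping required to upgrade the pointwise inverses of the previous paragraph to a bona fide natural isomorphism, is the only nontrivial part of the argument.
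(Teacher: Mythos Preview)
Your proposal is correct and follows essentially the same line as the paper: apply $F$ to the defining commutative diagram of a standard triangle, observe (via fullness) that $F\alpha_X$ and $F\beta_X$ remain left and right $F\mathcal{D}$-approximations, and conclude that the image sextuple lies in $\Phi'$. The paper is terser---it simply asserts $\sigma FX = F\sigma X$ from the approximation property rather than building an explicit $\eta$ and checking naturality---so your treatment of $\eta$ is more careful than the original, but the underlying argument is the same.
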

\begin{proof}
 Let
 $\xymatrix@C=0.5cm{X \ar[r]^{\overline{f}} & Y \ar[r]^{\overline{g}} & Z \ar[r]^{\overline{z}} & \sigma X  }$
be a standard triangle in $\mathcal{C}/\mathcal{D}$, then there exists the following  commutative  diagram of triangles in $\mathcal{C}$.
$$\xymatrix {
X\ar@{=}[d] \ar[r]^{f} & Y\ar[d]^y \ar[r]^{g} & Z\ar[d]^z \ar[r]^{h} & TX\ar@{=}[d]\\
X \ar[r]^{\alpha_X} & D_X \ar[r]^{\beta_X} & \sigma X \ar[r]^{\gamma_X }& TX
 }$$
Thus we have the following commutative  diagram of triangles in $\mathcal{C'}$
$$\xymatrix {
FX\ar@{=}[d] \ar[r]^{Ff} & FY\ar[d]^{Fy} \ar[r]^{Fg} & FZ\ar[d]^{Fz} \ar[r]^{\phi_X\cdot Fh} & TFX\ar@{=}[d]\\
FX \ar[r]^{F(\alpha_X)} & FD_X \ar[r]^{F(\beta_X)} & F\sigma X \ar[r]^{\phi_X\cdot F(\gamma_X) }& TFX
 }$$
where $\phi_X:FTX\rightarrow TFX$ is a natural equivalence. Note that $F(\alpha_X)$ is a left $F\mathcal{D}$-approximation
and  $F(\beta_X)$ is a right $F\mathcal{D}$-approximation, thus $\sigma FX=F\sigma X$.
By definition we obtain that $\xymatrix@C=0.5cm{FX \ar[r]^{\overline{Ff}} & FY \ar[r]^{\overline{Fg}} & FZ \ar[r]^{\overline{Fz}} & \sigma(FX)}$ is a triangle in $\mathcal{C}'/F\mathcal{D}$. Namely,
$\xymatrix@C=0.5cm{\widetilde{F}X \ar[r]^{\widetilde{F}\overline{f}} & \widetilde{F}Y \ar[r]^{\widetilde{F}\overline{g}} &
\widetilde{F}Z\ar[r]^{\widetilde{F}\overline{z}\:}&\sigma(\widetilde{F}X)}$
is a triangle in $\mathcal{C}'/F\mathcal{D}$.
Therefore $\widetilde{F}:\mathcal{C}/\mathcal{D} \rightarrow \mathcal{C}'/F\mathcal{D}$ is an exact functor.
\end{proof}
\begin{thm}Let
 $$ \xymatrix @C=2.7pc {
      \mathcal {C}' \ar[r]^{i_*} & \mathcal {C}   \ar@<+2.8ex>[l]_{i^!}\ar@<-2.8ex>[l]_{i^*}\ar[r]^{j^*}
      & \mathcal {C}'' \ar@<+2.8ex>[l]_{j_*}\ar@<-2.8ex>[l]_{j_!}
      }
$$
 be a recollement of triangulated categories,  $\mathcal{D}\subset$ Ker$j^*$
  be subcategories of $\mathcal{C}$  and
  $(\mathcal{C}, \mathcal{C})$ be a $\mathcal{D}$-mutation  pair. Then
\par
$$ \xymatrix @C=2.7pc {
      \mathcal {C}'/i^*(\mathcal{D}) \ar[r]^{\widetilde{i}_*} & \mathcal {C}/\mathcal{D}
      \ar@<+2.8ex>[l]_{\widetilde{i}^!}\ar@<-2.8ex>[l]_{\widetilde{i}^*}\ar[r]^{\widetilde{j}^*}
      & \mathcal {C}''\ar@<+2.8ex>[l]_{\widetilde{j}_*}\ar@<-2.8ex>[l]_{\widetilde{j}_!}
      }
$$ is a recollement of triangulated categories.
\end{thm}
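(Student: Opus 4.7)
The plan is to obtain the underlying additive recollement from Corollary~\ref{cor7}, endow each of the three quotient categories with a triangulated structure, and check that the six induced functors are exact. Since $\mathcal{D}\subseteq\mathrm{Ker}\,j^{*}=\mathrm{Im}\,i_{*}$, every $D\in\mathcal{D}$ is isomorphic to $i_{*}i^{*}D$, hence $i_{*}i^{*}(\mathcal{D})=\mathcal{D}$; taking $\mathcal{X}'=i^{*}(\mathcal{D})$ in Corollary~\ref{cor7} gives $i_{*}(\mathcal{X}')=\mathcal{D}$ and produces the required additive recollement with the six functors satisfying (R1)--(R3).

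For the triangulations, $\mathcal{C}/\mathcal{D}$ is triangulated by Lemma~\ref{lem5}, and $\mathcal{C}''$ is given. For the left-hand quotient, observe that $\mathrm{Ker}\,j^{*}$ is extension-closed in $\mathcal{C}$, and that for $X\in\mathrm{Ker}\,j^{*}$ the mutation triangle $X\to D_{X}\to\sigma X\to TX$ gives, via the exact functor $j^{*}$, that $\sigma X\in\mathrm{Ker}\,j^{*}$; hence $(\mathrm{Ker}\,j^{*},\mathrm{Ker}\,j^{*})$ is a $\mathcal{D}$-mutation pair and Lemma~\ref{lem5} furnishes a triangulation on $\mathrm{Ker}\,j^{*}/\mathcal{D}$. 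The full embedding $i_{*}\colon\mathcal{C}'\xrightarrow{\sim}\mathrm{Ker}\,j^{*}$ carries $i^{*}(\mathcal{D})$ to $\mathcal{D}$, which transports the structure to $\mathcal{C}'/i^{*}(\mathcal{D})$.

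Next I check exactness of $\widetilde{j}^{*}$ and $\widetilde{i}_{*}$ directly. The fullness of $F$ enters the proof of Lemma~\ref{lem3.6} only through Lemma~\ref{lem3.4}, and only to force the images $F(\alpha_{X})$ and $F(\beta_{X})$ to remain left/right $F\mathcal{D}$-approximations; for $F=j^{*}$ this is trivially satisfied because $j^{*}(\mathcal{D})=0$, so Lemma~\ref{lem3.6}'s argument yields $\widetilde{j}^{*}$ exact. For $\widetilde{i}_{*}$, the equivalence of the previous paragraph identifies it with the functor $\mathrm{Ker}\,j^{*}/\mathcal{D}\to\mathcal{C}/\mathcal{D}$ induced by the fully faithful inclusion $\mathrm{Ker}\,j^{*}\hookrightarrow\mathcal{C}$; a $(*)$-triangle in $\mathrm{Ker}\,j^{*}/\mathcal{D}$ comes from a triangle in $\mathcal{C}$ with objects in $\mathrm{Ker}\,j^{*}$ and $\mathcal{D}$-monic first map (both properties coinciding in $\mathrm{Ker}\,j^{*}$ and in $\mathcal{C}$ since $\mathcal{D}\subseteq\mathrm{Ker}\,j^{*}$), so it is manifestly a $(*)$-triangle in $\mathcal{C}/\mathcal{D}$.

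Finally, I invoke the classical fact that in an adjunction between triangulated categories both adjoints of a triangulated functor are triangulated. Exactness of $\widetilde{j}^{*}$ upgrades its adjoints $\widetilde{j}_{!}$ and $\widetilde{j}_{*}$, and exactness of $\widetilde{i}_{*}$ upgrades $\widetilde{i}^{*}$ and $\widetilde{i}^{!}$; combined with (R1)--(R3) this yields the desired recollement of triangulated categories. The main obstacle I anticipate is the bookkeeping in the direct exactness verifications, in particular matching the Iyama--Yoshino shift $\sigma$ on each quotient with the underlying translation $T$ via the natural isomorphisms built from the chosen mutation triangles; everything else follows formally from the adjunction principle.
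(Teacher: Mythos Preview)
Your proof is correct and shares the overall architecture with the paper's: obtain the additive recollement from Corollary~\ref{cor7}, equip the three quotients with Iyama--Yoshino triangulated structures, verify exactness of two of the six functors directly, and then propagate exactness to the remaining four via the principle that adjoints of triangulated functors are triangulated.

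The genuine difference lies in \emph{which} two functors are checked directly and how the triangulation on $\mathcal{C}'/i^{*}(\mathcal{D})$ is produced. The paper asserts that $i^{*}$ and $j^{*}$ are full and dense, then invokes Lemma~\ref{lem3.4} and Lemma~\ref{lem3.6} verbatim to obtain both the triangulated structure on $\mathcal{C}'/i^{*}(\mathcal{D})$ and the exactness of $\widetilde{i}^{*}$ and $\widetilde{j}^{*}$. You instead transport the triangulation through the triangle equivalence $i_{*}\colon\mathcal{C}'\xrightarrow{\sim}\mathrm{Ker}\,j^{*}$, after checking that $(\mathrm{Ker}\,j^{*},\mathrm{Ker}\,j^{*})$ is itself a $\mathcal{D}$-mutation pair, and then verify exactness for $\widetilde{j}^{*}$ (noting that fullness is irrelevant when $j^{*}(\mathcal{D})=0$) and for $\widetilde{i}_{*}$ (via the inclusion $\mathrm{Ker}\,j^{*}\hookrightarrow\mathcal{C}$). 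Your route is a bit longer but has the advantage of relying only on the fully faithful functor $i_{*}$ rather than on the fullness of $i^{*}$ and $j^{*}$; the latter fails in general recollements (e.g.\ for the $A_{2}$ quiver, $\mathrm{Hom}_{\mathcal{C}}(S_{1},P_{1})=0$ while $j^{*}S_{1}\cong j^{*}P_{1}$), so your argument is in fact more robust than the paper's at this step. The bookkeeping you flag---matching $\sigma$ across the three quotients---is routine once one fixes compatible mutation triangles, and your identification $\sigma\cong T$ on $\mathcal{C}''$ when $j^{*}(\mathcal{D})=0$ is exactly what is needed.
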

\begin{proof}
Since $\mathcal{D}\subset$ Ker$j^*=$Im$i_*$, there exists $\mathcal{D}'$ of subcategory of $\mathcal {C}'$
such that  $i_*(\mathcal{D}')=\mathcal{D}$. Thus
$i_*i^*(\mathcal {D})=i_*i^*i_*(\mathcal{D}')=i_*(\mathcal{D}')= \mathcal {D}$.
By Corollary \ref{cor7}, the following diagram  $$ \xymatrix @C=2.7pc {
      \mathcal {C}'/i^*(\mathcal{D}) \ar[r]^{\widetilde{i}_*} & \mathcal {C}/\mathcal{D}
      \ar@<+2.8ex>[l]_{\widetilde{i}^!}\ar@<-2.8ex>[l]_{\widetilde{i}^*}\ar[r]^{\widetilde{j}^*}
      & \mathcal {C}''\ar@<+2.8ex>[l]_{\widetilde{j}_*}\ar@<-2.8ex>[l]_{\widetilde{j}_!}
      }
$$ is a recollement of additive categories.

Since $(\mathcal{C}, \mathcal{C})$ is a $\mathcal{D}$-mutation  pair, $\mathcal{C}/\mathcal{D}$ is a triangulated category.
Since the functors $i^*$ and $j^*$ are full and dense, $\mathcal {C}'/i^*(\mathcal{D})$ is a triangulated category by Lemma \ref{lem3.4} and $\widetilde{j^*}, \widetilde{i^*}$ are exact functors by  Lemma \ref{lem3.6}.
Since the left and right adjoint functors of an exact functor are exact functors too,
 $ \widetilde{i}_*, \widetilde{i}^!, \widetilde{j}_!, \widetilde{j}^*$ are exact functors.
 This finishes the proof by Remark \ref{rem}(1).
 \end{proof}
 \par
 From now on, assume that $k$ is an algebraically closed field, and all categories are  $k$-linear categories with finite dimensional Hom spaces and split idempotents. Let $\mathcal{C}$ be a triangulated category with a Serre functor $S$. Then $\mathcal{C}$ has AR-triangles and we denote by $\tau$ the Auslander-Reiten translation functor. If $\mathcal{X}$ is a functorially finite subcategory of $\mathcal{C}$ such that $\tau\mathcal{X}=\mathcal{X}$, then $(\mathcal{C},\mathcal{C})$ is a $\mathcal{X}$-mutation pair by \cite[Lemma 2.2]{[J]}.
 Thus $\mathcal{C}/\mathcal{X}$ is a triangulated category.
According to Theorem \ref{thm5}, we have the following corollary.
 \begin{cor}
 Let
 $$ \xymatrix @C=2.7pc {
      \mathcal {C}' \ar[r]^{i_*} & \mathcal {C}   \ar@<+2.8ex>[l]_{i^!}\ar@<-2.8ex>[l]_{i^*}\ar[r]^{j^*}
      & \mathcal {C}'' \ar@<+2.8ex>[l]_{j_*}\ar@<-2.8ex>[l]_{j_!}
      }
$$
 be a recollement of triangulated categories,
  $\mathcal{X}$ a  functorially finite subcategory of $\mathcal{C}$ such that $\tau\mathcal{X}=\mathcal{X}$ and $\mathcal{X}\subset$ Ker$j^*$,
 then
\par
$$ \xymatrix @C=2.7pc {
      \mathcal {C}'/i^*(\mathcal{X}) \ar[r]^{\widetilde{i}_*} & \mathcal {C}/\mathcal{X}
      \ar@<+2.8ex>[l]_{\widetilde{i}^!}\ar@<-2.8ex>[l]_{\widetilde{i}^*}\ar[r]^{\widetilde{j}^*}
      & \mathcal {C}''\ar@<+2.8ex>[l]_{\widetilde{j}_*}\ar@<-2.8ex>[l]_{\widetilde{j}_!}
      }
$$ is a recollement of triangulated categories.
\end{cor}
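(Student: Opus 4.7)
The plan is to derive this corollary as a direct specialization of the preceding theorem (the recollement-of-triangulated-categories result that immediately precedes this corollary), applied with the mutation subcategory $\mathcal{D}$ taken to be $\mathcal{X}$. The only nontrivial content is to verify that the hypotheses of that theorem are satisfied, after which nothing further needs to be checked.

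First I would invoke the hypothesis that $\mathcal{C}$ is a $k$-linear Hom-finite Krull-Schmidt triangulated category with Serre functor $S$, so that $\mathcal{C}$ admits Auslander--Reiten triangles and the Auslander--Reiten translation $\tau$ is defined. Since $\mathcal{X}$ is functorially finite and satisfies $\tau\mathcal{X}=\mathcal{X}$, the cited \cite[Lemma 2.2]{[J]} yields that $(\mathcal{C},\mathcal{C})$ is an $\mathcal{X}$-mutation pair. This is the single external input needed in order to switch from the framework of functorial finiteness to the framework of mutation pairs used in the preceding theorem.

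Next I would observe that the remaining hypotheses of the preceding theorem hold verbatim: the recollement of triangulated categories is assumed, and the containment $\mathcal{X}\subset \mbox{Ker}\,j^*$ is part of the corollary's assumption, playing the role of $\mathcal{D}\subset \mbox{Ker}\,j^*$ there. Consequently that theorem applies with $\mathcal{D}$ replaced by $\mathcal{X}$, and produces precisely the diagram displayed in the statement as a recollement of triangulated categories.

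There is essentially no obstacle in this argument beyond bookkeeping; the conceptual work has been carried out already in Lemmas \ref{lem3.4} and \ref{lem3.6} (which transport mutation pairs and exactness along full exact functors, needed to make sense of $\mathcal{C}'/i^*(\mathcal{X})$ as a triangulated category and of $\widetilde{i^*},\widetilde{j^*}$ as exact functors) and in the preceding theorem (which packages the recollement axioms). The only point worth flagging is that one should note $i^*(\mathcal{X})$ rather than some other subcategory of $\mathcal{C}'$ appears on the left: this is forced by Corollary \ref{cor7}, which was used inside the preceding theorem's proof under the assumption $\mathcal{X}\subset \mbox{Ker}\,j^*=\mbox{Im}\,i_*$ so that $\mathcal{X}=i_*i^*(\mathcal{X})$ and $\mathcal{X}$ is recovered from $i^*(\mathcal{X})$ without ambiguity.
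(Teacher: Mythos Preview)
Your proposal is correct and matches the paper's approach: the paper simply notes (in the paragraph preceding the corollary) that $\tau\mathcal{X}=\mathcal{X}$ together with functorial finiteness make $(\mathcal{C},\mathcal{C})$ an $\mathcal{X}$-mutation pair via \cite[Lemma 2.2]{[J]}, and then states the corollary as an immediate consequence without further proof. Your write-up spells out precisely this reduction to the preceding theorem with $\mathcal{D}=\mathcal{X}$.
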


\end{document}